\documentclass[12pt]{article}
\usepackage[utf8]{inputenc}
\usepackage[T1]{fontenc}
\usepackage{amsmath, amsthm,enumerate}
\usepackage{amssymb,graphicx}
\usepackage{fullpage,mathtools}
\usepackage{xcolor}
\usepackage{cancel}
\usepackage[numbers,sort&compress]{natbib}

\usepackage[pdftex,breaklinks,pdfpagemode={None},pdfstartview={FitH},
            pdfview={FitH},colorlinks,linkcolor={black},
            citecolor={black}, urlcolor={black}]{hyperref}

\usepackage{tikz}
\usetikzlibrary{graphs,decorations.pathmorphing, decorations.pathreplacing, angles, quotes, shapes, arrows, topaths, calc, patterns, backgrounds, positioning, arrows, shapes.multipart}

\tikzset{st/.style = {circle,draw = blue!40,fill = blue!40,
  inner sep = 1pt, outer sep = 0pt}}
\tikzset{stt/.style = {circle,draw = teal!100,fill = teal!100,
  inner sep = 1.5pt, outer sep = 0pt}}
\tikzset{stt1/.style = {circle,draw=black,
  inner sep = 1.5pt, outer sep = 0pt}}    
\tikzset{inn/.style = {circle,draw = black!20,fill = black!10,
  inner sep = 2pt, outer sep = 0pt}}
\tikzset{nwtext/.style = {draw = white,fill = white,
    anchor= north west,inner sep = 2pt, outer sep = 0pt}}
\tikzset{wtext/.style = {draw = white,fill = white,
    anchor= west,inner sep = 2pt, outer sep = 0pt}}
\tikzset{elip/.style={ellipse,draw=black,fill=blue!15,
   inner sep = 2pt, outer sep = 2pt}}
\tikzset{min/.style={inner sep = 0pt, outer sep = 0pt}}
\newcommand{\email}[1]{\href{mailto:#1}{\tt#1}}

\newtheorem{theorem}{Theorem}[section]

\newtheorem{lemma}[theorem]{Lemma}

\usepackage[mathlines]{lineno}
\usepackage{etoolbox} %

\newcommand*\linenomathpatch[1]{%
  \cspreto{#1}{\linenomath}%
  \cspreto{#1*}{\linenomath}%
  \csappto{end#1}{\endlinenomath}%
  \csappto{end#1*}{\endlinenomath}%
}
\newcommand*\linenomathpatchAMS[1]{%
  \cspreto{#1}{\linenomathAMS}%
  \cspreto{#1*}{\linenomathAMS}%
  \csappto{end#1}{\endlinenomath}%
  \csappto{end#1*}{\endlinenomath}%
}

\expandafter\ifx\linenomath\linenomathWithnumbers
  \let\linenomathAMS\linenomathWithnumbers
  \patchcmd\linenomathAMS{\advance\postdisplaypenalty\linenopenalty}{}{}{}
\else
  \let\linenomathAMS\linenomathNonumbers
\fi

\linenomathpatch{equation}
\linenomathpatchAMS{gather}
\linenomathpatchAMS{multline}
\linenomathpatchAMS{align}
\linenomathpatchAMS{alignat}
\linenomathpatchAMS{flalign}

\newcommand{\sq}[1]{\overline{#1^2}}
\newcommand{\dist}{\textup{dist}}

\begin{document}
\title{There are no nontrivial chordal square-complementary graphs}
\author{Pierre Aboulker$^{1}$ \and Martin Milani{\v c}$^{2,5}$ \and
  Peter Muršič$^{3,6}$ \and Miguel A. Pizaña$^{4,7}$}

\maketitle
\footnotetext[1]{DIENS, \'Ecole normale sup\'erieure, CNRS, PSL University, Paris, France.  
E-mail: \email{pierreaboulker@gmail.com}.}
\footnotetext[2]{FAMNIT and IAM, University of Primorska. 
E-mail: \email{martin.milanic@upr.si}.}
\footnotetext[3]{FAMNIT, University of Primorska.  
E-mail: \email{peter.mursic@upr.si}.}
\footnotetext[4]{Universidad Autónoma Metropolitana. 
E-mail: \email{mpizana@gmail.com}.}
\footnotetext[5]{Partially supported by the Slovenian Research and Innovation Agency (I0-0035, research program P1-0285 and research projects J1-60012, J1-70035, J1-70046, and N1-0370).}
\footnotetext[6]{Partially supported by the Slovenian Research and Innovation Agency (research project N1-0370).}
\footnotetext[7]{Partially supported by CONAHCYT, grant A1-S-45528.}

\renewcommand{\thefootnote}{\arabic{footnote}}

\begin{abstract}
 We study \emph{square-complementary} graphs $G$ (satisfying $G^2 \cong \overline{G}$).  
  We show that in such graphs, no two vertices have comparable closed neighborhoods.  
  This implies that nontrivial square-complementary graphs have no simplicial vertices and are not chordal, thus solving two open problems posed in Discrete Mathematics \textbf{327} (2014) \hbox{62--75}.  
  We also show that no nontrivial square-complementary graph is distance-hereditary.
\end{abstract}

\maketitle

\section{Introduction}

A \emph{square-complementary} graph (\emph{squco} for short) is a graph whose square and complement are isomorphic.
Square-complementary graphs were introduced independently by  Schuster~\cite{Sch81} and by Akiyama, Era, and Exoo~\cite{AEE81} and further studied by several other
authors~\cite{CKR82,CK95,CLR89,DMP21,JP22,MPPV14}.

Milani\v{c} et al.~posed several open problems on squco graphs~\cite{MPPV14}, including whether a nontrivial squco graph could be chordal (Problem~5) or contain simplicial vertices (Problem~6).
Here we answer both questions in the negative.  
Moreover, we also prove that a nontrivial squco graph cannot be distance-hereditary.  
All these results generalize the fact that there are no nontrivial squco trees; see~\cite{BST94}.
Furthermore, the result for chordal graphs generalizes the fact that there are no nontrivial squco split graphs, interval graphs, or block graphs (see~\cite{MPPV14}), while the result for distance-hereditary graphs generalizes the fact that there are no nontrivial squco cographs (see~\cite{MPPV14}).

The fact that there are no nontrivial chordal squco graphs was mentioned in~\cite[p.~185]{Pri95}, referring to a preprint of Capobianco and Kim bearing the same title as the paper~\cite{CK95}.
However, this result is not mentioned in~\cite{CK95} and we are not aware of any published proof of this result.

\section{Preliminaries}

We consider only simple, finite, and undirected graphs.  
We use standard terminology and notation for the \emph{vertex set $V(G)$}, the \emph{edge set $E(G)$}, the \emph{open neighborhood $N(x)=N_{G}(x)$}, the \emph{closed neighborhood $N[x]=N_{G}[x]$} and the \emph{distance $\dist(x,y)=\dist_{G}(x,y)$}.  
Also, we define the \emph{ball of radius two} at a vertex $x$, by $B_2(x)=B_{2}^{G}(x) =\{y \in V(G)\colon \dist(x,y)\leq 2\}$. 
A graph is \emph{nontrivial} if it has at least two vertices, and \emph{trivial}, otherwise.

A vertex $x$ is \emph{simplicial} if $N[x]$ induces a complete
subgraph of $G$; it is \emph{pendant} if $|N(x)|=1$.  
A vertex \emph{$x$ is dominated by} a vertex $y$ if $N[x]\subseteq N[y]$; when $x\neq y$, we say that \emph{$x$ is dominated}.  
Two vertices $x,y$ are said to be \emph{true twins} if $N[x]=N[y]$; they are said to be \emph{false twins} if $N(x)=N(y)$.  
We say that $x$ and $y$ are \emph{twins} in any of these two cases. 
Any vertex is a twin of itself; however, we say that a vertex $x$ is a \emph{twin} only when there is a different vertex $y$ such that $x$ and $y$ are twins. 

Given a vertex $x$ in a connected graph $G$, the \emph{eccentricity} of $x$ in $G$ is the maximum distance from $x$ to any other vertex in $G$.  
The minimum eccentricity of a vertex of $G$ is its \emph{radius}, denoted $r(G)$, and the maximum eccentricity is its \emph{diameter}, denoted $d(G)$. 
Given a set $X\subseteq V(G)$, we denote by $G-X$ the subgraph of $G$ induced by $V(G)\setminus X$.

The \emph{square} of a graph $G$ is the graph denoted by $G^2$ and obtained from $G$ by adding all edges connecting pairs of vertices at distance~$2$.  
The \emph{complement} $\overline{G}$ of $G$ is the graph with vertex set $V(G)$, in which two distinct vertices are adjacent if and only if they are non-adjacent in $G$.  
We say that a graph $G$ is \emph{square-complementary} (\emph{squco} for short) if $G^2\cong \overline{G}$, where $\cong$ denotes the graph isomorphism relation.  
Note that a graph $G$ is squco if and only if $G$ is isomorphic to $\sq{G}$. 
Also, in a squco graph $G$, $xy\in E(\sq{G})$ if and only if $\dist_{G}(x,y)\geq 3$; hence, $N_{\sq{G}}(x) = \overline{B_{2}^{G}(x)}$ and it follows that $B_{2}^{G}(x)\subseteq B_{2}^{G}(y)$ if and only if $N_{\sq{G}}(y)\subseteq N_{\sq{G}}(x)$.  
Squco graphs are known to be connected and free of pendant vertices; nontrivial squco graphs are known to have radius 3~\cite{BST94}. 
More precisely, the following theorem is a consequence of~\cite[Lemmas 1.1 and 1.2]{BST94}.

\begin{theorem}\label{thm:squco-radius-diameter}
  If $G$ is a nontrivial squco graph, then $G$ is a connected graph with $r(G) = 3$ and $d(G)\in \{3,4\}$.
\end{theorem}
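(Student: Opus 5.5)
We need to prove that a nontrivial squco graph $G$ is connected, has radius $3$, and has diameter $3$ or $4$. Throughout, let $n=|V(G)|\ge 2$ and fix an isomorphism $G\cong \sq{G}$.

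\textbf{Connectivity.} First we show $G$ is connected. Suppose not. Then any two vertices in different components are nonadjacent in $G^2$. Hence $G^2$ is disconnected, since each component of $G^2$ is the square of a component of $G$. On the other hand, the complement of a disconnected graph is connected: two vertices in different components are adjacent in $\overline{G}$, and two vertices in the same component share a common $\overline{G}$-neighbour in another component. So $\overline{G}$ is connected, contradicting $G^2\cong\overline{G}$.

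\textbf{Bounds from counting.} Since $G$ is connected, $\overline{G}\cong G^2\supseteq G$. Therefore $|E(G)|\le |E(\overline{G})|=\binom n2-|E(G)|$, and moreover $\overline{G}$ has exactly the same number of edges as $G^2$. The edges of $\sq{G}$ are exactly the pairs at distance at least $3$, so $\sq{G}\cong G$.
\begin{itemize}
\item If $d(G)\le 2$, then $\sq{G}$ is edgeless, while $G$ has an edge because $n\ge 2$ and $G$ is connected. This is a contradiction, so $d(G)\ge 3$.
\item Since $G\cong\sq{G}$ is connected, the graph $\sq{G}$ is connected. In particular every vertex $x$ has some vertex at distance $\ge 3$, so $r(G)\ge 3$.
\end{itemize}

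\textbf{Upper bounds.} The key observation is that if $\overline{H}$ is connected for a graph $H$, then $H$ satisfies a diameter restriction. The classical fact we use is: if $d(H)\ge 3$ then $d(\overline{H})\le 3$, and if $d(H)\ge 4$ then $d(\overline{H})\le 2$. We apply it with $H=G^2$, using $\overline{G^2}=\sq{G}\cong G$.

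Since $d(G)\ge 3$, some pair of vertices is at distance $\ge 3$. Hence $d(G^2)\ge 2$, which by itself gives nothing. So we argue instead about the pair $(G,\sq G)$, exploiting $\sq G\cong G$.

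\textbf{Diameter at most $4$.} Suppose $\dist_G(u,v)\ge 5$. Every vertex $w$ satisfies $\dist(u,w)+\dist(w,v)\ge 5$, so $w$ is at distance $\ge 3$ from $u$ or from $v$. Thus $\{u,v\}$ dominates $\sq{G}$, and $uv\in E(\sq G)$. In $G$ these vertices would correspond to an adjacent pair $u',v'$ dominating $G$. Then every vertex is within distance $1$ of $u'$ or $v'$, so $d(G)\le 3$, contradicting $d(G)\ge 5$. So $d(G)\le 4$.

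\textbf{Radius at most $3$.} Every $x$ has a vertex $y$ with $\dist(x,y)=e(x)$. The radius bound is the step I expect to be most delicate. First suppose every vertex has eccentricity $4$, so the diameter and radius both equal $4$. Take $u,v$ at distance $4$. Then every $w$ is at distance $\ge 3$ from $u$ or from $v$, except those with $\dist(u,w)=\dist(v,w)=2$. Translating through the isomorphism, $G$ has an adjacent pair $u',v'$ whose closed neighbourhoods cover all vertices except a set $S$. The vertices of $S$ correspond to vertices at distance exactly $2$ from both $u$ and $v$.

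I would try to show from this that some vertex has eccentricity $\le 3$, using the structure around $u',v'$: every vertex is within distance $2$ of $u'$, except vertices of $S$ that are adjacent to nothing covered. Alternatively, I would instead cite the known result [BST94, Lemmas 1.1 and 1.2], as the paper does, for the radius equality. Since the paper itself attributes the theorem to those lemmas, the fallback is to present the connectivity and lower-bound arguments above, and invoke [BST94] for $r(G)\le 3$.
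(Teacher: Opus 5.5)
The paper gives no argument for this theorem; it simply quotes [BST94, Lemmas 1.1 and 1.2]. Your proofs of the following three facts are all correct:
connectivity;
$r(G)\ge 3$, since connectivity of $\sq{G}\cong G$ gives every vertex a vertex at distance $\ge 3$;
$d(G)\le 4$, since a pair at distance $\ge 5$ is a dominating edge of $\sq{G}$, so $G$ would have a dominating edge and $d(G)\le 3$.

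Your ``Upper bounds'' paragraph is a dead end only because you apply the complement fact to $H=G^2$. Applied to $H=G$, it gives $d(G)\le 4$ at once, since $d(G)\ge 4$ forces $d(G^2)=d(\overline{G})\le 2$.

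The genuine gap is $r(G)\le 3$. Your sketch under ``Radius at most 3'' never reaches a conclusion, so the one nontrivial inequality rests entirely on the citation. That matches how the paper treats the theorem, but it is not a proof.

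The missing idea is to bound $\sq{G}$-distances between vertices of large eccentricity $e_G$. The claim is: if $e_G(u),e_G(w)\ge 4$, then $\dist_{\sq{G}}(u,w)\le 3$.

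To prove it, write $F(x)=N_{\sq{G}}(x)=\{y:\dist_G(x,y)\ge 3\}$. First observe that if $e_G(x)\ge 4$ and $y\in N_G(x)$, then $\dist_{\sq{G}}(x,y)\le 2$. Indeed, any $z$ with $\dist_G(x,z)\ge 4$ has $\dist_G(y,z)\ge 3$, so $z$ is a common $\sq{G}$-neighbour of $x$ and $y$.

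Now suppose $\dist_{\sq{G}}(u,w)\ge 4$. Each $x\in F(u)$ then has $\dist_{\sq{G}}(x,w)\ge 3$, so $x\notin F(w)\cup\{w\}$. Also $x\notin N_G(w)$, by the first observation applied to $w$. Thus $\dist_G(w,x)=2$ for every $x\in F(u)$.

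Pick $v$ with $\dist_G(u,v)\ge 4$. Then $\dist_G(w,v)=2$, so $w$ and $v$ have a common neighbour $c$. It satisfies $\dist_G(u,c)\ge 3$, i.e.\ $c\in F(u)$, yet $\dist_G(w,c)=1$ --- a contradiction.

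Finally, suppose $r(G)\neq 3$. Your bounds $3\le r(G)\le d(G)\le 4$ then force every vertex to have eccentricity $4$. The claim gives $d(\sq{G})\le 3$, contradicting $\sq{G}\cong G$ and $d(G)=4$. Hence $r(G)=3$ and $d(G)\in\{3,4\}$, with no citation needed.
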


\emph{Chordal graphs} are graphs where every cycle of length at least~$4$ has a chord.  
Chordal graphs are known to contain simplicial vertices~\cite{Dir61}.  
\emph{Distance-hereditary} graphs are graphs in which the distances in every connected induced subgraph are the same as they are in the original graph. 
Nontrivial distance-hereditary graphs are known to contain either pendant vertices or twins (either false or true) \cite{BM86}.

Let $W$ be a finite set. 
A \emph{permutation} of $W$ is a bijective function $\varphi:W\rightarrow W$. 
The \emph{identity permutation} of $W$ is denoted by $1_{W}$. 
The \emph{product of two permutations} is simply their composition. 
The minimum positive integer $r$ such that $\varphi^{r} =1_{W}$ is called the \emph{exponent} of $\varphi$. 
Since $W$ is finite, every permutation of $W$ has an exponent. 
Given $x\in W$ and $X\subseteq W$, the \emph{$\varphi$-orbit of $x$} is ${\varphi\cdot x =\{\varphi^{n}(x)\colon n\in \mathbb{Z}\}}$ and the \emph{$\varphi$-orbit of $X$} is ${\varphi\cdot X =\{\varphi^{n}(X)\colon n\in \mathbb{Z}\}}$.
Since $W$ is finite, given $x$ and $X$ as before, there are $s,t\in \mathbb{Z}^{+}$ such that $\varphi^{s}(x)=x$ and $\varphi^{t}(X)=X$, hence, $\varphi\cdot x =\{x, \varphi(x), \varphi^{2}(x), \ldots,\varphi^{s-1}(x)\}$ and $\varphi\cdot X =\{X, \varphi(X), \varphi^{2}(X), \ldots,\varphi^{t-1}(X)\}$. 
Note that the exponent, $r$, of $\varphi$ satisfies the previous conditions (in the place of $s$ or $t$), but we usually take $s$ and $t$ to be the minimum positive integers satisfying the previous conditions. 
Finally, a set $X\subseteq W$ is said to be \emph{$\varphi$-invariant} if $\varphi(X) =X$.

\section{Results}

We say that a pair of different vertices, $\{x,y\}$, is a \emph{true dominating pair} if $N[x]\subseteq N[y]$ and that it is a \emph{false dominating pair} if $N(x) \subseteq N(y)$.  
Note that in true dominating pairs, $x$ and $y$ are necessarily adjacent, while in false dominating pairs, $x$ and $y$ are necessarily non-adjacent.
Dominated vertices belong to at least one dominating pair.  Also, note that the pair is a set and hence it does not know which of its vertices are dominated.

\begin{lemma}\label{lem:notruedompairs}
  Let $G$ be a squco graph and $\{x,y\}$ a dominating pair (either true or false) in~$G$.  
  Then, $\{x,y\}$ is a false dominating pair in $\sq{G}$ and $G$ does not contain true dominating pairs.
\end{lemma}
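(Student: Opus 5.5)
The plan is to prove the first assertion by a direct distance computation in $G$. Then the second assertion follows from a counting argument that uses the isomorphism $G\cong\sq{G}$. Throughout, write $H=\sq{G}$. Recall from the preliminaries that for distinct vertices $u,v$ we have $uv\in E(H)$ if and only if $\dist_G(u,v)\ge 3$, and that $B_2^G(x)\subseteq B_2^G(y)$ is equivalent to $N_H(y)\subseteq N_H(x)$. Since $G$ has a dominating pair, it is nontrivial. So by Theorem~\ref{thm:squco-radius-diameter}, $G$ is connected, and in particular $x$ has a neighbor.

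\emph{Step 1: $B_2^G(x)\subseteq B_2^G(y)$ and $\dist_G(x,y)\le 2$.} In both the true and the false case we have $N(x)\subseteq N[y]$, and this is the only hypothesis I will use. Pick a neighbor $w$ of $x$. Then $w\in N[y]$, so either $w=y$ or $w$ is adjacent to $y$. In either case $\dist_G(x,y)\le 2$, which shows $x\in B_2(y)$ and that $xy\notin E(H)$.

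Now take $z\in B_2(x)$ with $z\neq x$. If $z\in N(x)$, then $z\in N[y]\subseteq B_2(y)$. If $\dist(x,z)=2$, choose a common neighbor $w$ of $x$ and $z$. Since $w\in N[y]$, we get $\dist(y,z)\le \dist(y,w)+1\le 2$. Hence $B_2(x)\subseteq B_2(y)$, and therefore $N_H(y)\subseteq N_H(x)$. Together with the non-adjacency of $x$ and $y$ in $H$, this says that $\{x,y\}$ is a false dominating pair of $H$, with $y$ now the dominated vertex.

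\emph{Step 2: no true dominating pairs.} Let $D(G)$ denote the set of all dominating pairs of $G$, true or false, and let $F(G)\subseteq D(G)$ denote the false ones. Step~1 shows that $D(G)\subseteq F(H)$ as sets of unordered pairs of vertices. Because $H\cong G$ and an isomorphism maps true (respectively false) dominating pairs to true (respectively false) dominating pairs, we have $|F(H)|=|F(G)|$. Thus $|D(G)|\le |F(G)|$. Since $F(G)\subseteq D(G)$, this forces $D(G)=F(G)$, so $G$ has no true dominating pairs.

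The only delicate points are in Step~1. The first is to make sure $x$ is not isolated, which is what guarantees $\dist(x,y)\le 2$ in the false case; this is handled by the connectivity from Theorem~\ref{thm:squco-radius-diameter}. The second is to note that the pair is unordered, so a possible swap of which vertex is dominated is harmless for the counting in Step~2.
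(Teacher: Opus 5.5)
Your proposal is correct and follows the paper's route. You show $B_2(x)\subseteq B_2(y)$, so that $\{x,y\}$ becomes a false dominating pair in $\sq{G}$, and then count false dominating pairs using $G\cong\sq{G}$. You also make explicit details the paper leaves implicit: the verification of $B_2(x)\subseteq B_2(y)$, where connectivity guarantees $\dist(x,y)\le 2$, and the set-inclusion form of the counting argument.
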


\begin{proof}
  Note that $G$ is nontrivial.  
  Being a squco graph, it follows from Theorem~\ref{thm:squco-radius-diameter} that $G$ is connected and has radius $3$.  
  If $\{x,y\}$ is a dominating pair, it follows (without loss of generality) that $B_{2}(x)\subseteq B_{2}(y)$, but
  then we have that $N(y)\subseteq N(x)$ in $\sq{G}$.  
  It follows that $\{x,y\}$ is a false dominating pair in $\sq{G}$.  
  If $G$ contained true dominating pairs, then $\sq{G}$ would contain more false dominating pairs than $G$ does, contrary to the hypothesis that $G$ and $\sq{G}$ are isomorphic.
\end{proof}

As immediate consequences we get:

\begin{theorem}\label{thm:notruetwins}
  If $G$ is nontrivial and squco then $G$ does not contain dominated vertices, true twins, nor simplicial vertices. \qed
\end{theorem}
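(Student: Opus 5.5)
The plan is to derive each of the three conclusions directly from Lemma~\ref{lem:notruedompairs}, which says that a squco graph $G$ contains no true dominating pairs. Throughout, let $G$ be nontrivial and squco.

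\emph{Dominated vertices.} Suppose $x$ is dominated, so there is some $y\neq x$ with $N[x]\subseteq N[y]$. Then $\{x,y\}$ is by definition a true dominating pair. This contradicts Lemma~\ref{lem:notruedompairs}, so $G$ has no dominated vertices.

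\emph{True twins.} If $x\neq y$ and $N[x]=N[y]$, then in particular $N[x]\subseteq N[y]$. So $x$ is dominated, which the previous step rules out.

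\emph{Simplicial vertices.} Let $x$ be simplicial. By Theorem~\ref{thm:squco-radius-diameter}, $G$ is connected and nontrivial, so $x$ has some neighbor $y$. Since $N[x]$ is a clique containing $y$, every vertex of $N[x]$ equals $y$ or is adjacent to $y$. Hence $N[x]\subseteq N[y]$ with $y\neq x$, so $x$ is dominated, again a contradiction.

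The only point that needs any care is the existence of a neighbor of the simplicial vertex. This is exactly where nontriviality and connectivity from Theorem~\ref{thm:squco-radius-diameter} are used; for the trivial graph the single vertex is simplicial. Everything else is an immediate unwinding of the definitions.
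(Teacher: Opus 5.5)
Your proof is correct and follows the paper's route: the paper states this theorem as an immediate consequence of Lemma~\ref{lem:notruedompairs} (no true dominating pairs), and you make the three one-line deductions explicit. Your use of Theorem~\ref{thm:squco-radius-diameter} to guarantee that a simplicial vertex has a neighbor is exactly the small detail the paper leaves implicit.
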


\begin{theorem}\label{thm:nochordal}
  If $G$ is nontrivial and squco, then $G$ is not chordal. \qed
\end{theorem}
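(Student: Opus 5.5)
The plan is to derive Theorem~\ref{thm:nochordal} directly from Theorem~\ref{thm:notruetwins}, together with Dirac's classical result~\cite{Dir61} that every chordal graph has a simplicial vertex. The argument is a contradiction: assume $G$ is nontrivial, squco, and chordal, and derive a contradiction.

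First, since $G$ is chordal, Dirac's theorem gives a simplicial vertex $x$ of $G$. Second, Theorem~\ref{thm:notruetwins} says that a nontrivial squco graph has no simplicial vertices, which contradicts the existence of $x$.

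For robustness, I would also recall why a simplicial vertex cannot occur here, in case Theorem~\ref{thm:notruetwins} is being read only as forbidding dominated vertices. By Theorem~\ref{thm:squco-radius-diameter}, $G$ is connected and nontrivial, so the simplicial vertex $x$ has a neighbor $y$. Because $N[x]$ is a clique containing $y$, every vertex of $N[x]$ equals $y$ or is adjacent to $y$, so $N[x]\subseteq N[y]$. Thus $\{x,y\}$ is a true dominating pair, which Lemma~\ref{lem:notruedompairs} excludes.

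There is no real obstacle. The only points to watch are the trivial-graph exception, which is excluded by hypothesis, and making sure $x$ has a neighbor, which follows from connectivity in Theorem~\ref{thm:squco-radius-diameter}. Dirac's theorem applies to any nonempty chordal graph, so it needs no further conditions.
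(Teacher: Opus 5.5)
Your proposal is correct and matches the paper's own derivation: Theorem~\ref{thm:nochordal} is stated there as an immediate consequence of Theorem~\ref{thm:notruetwins}, which says nontrivial squco graphs have no simplicial vertices. That theorem in turn follows from Lemma~\ref{lem:notruedompairs} exactly as in your backup argument, and combining it with Dirac's theorem that chordal graphs have simplicial vertices gives the result.
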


Note: false twin vertices in $G$ are also false twin vertices in $\sq{G}$.  
Since the two graphs are isomorphic, the sets of all twin vertices of $G$ and $\sq{G}$ coincide.

We say that a graph $G$ is a \emph{minimal nontrivial squco graph} if it is a nontrivial squco graph that does not contain any proper induced nontrivial squco subgraph.  
Such graphs cannot have twins of any kind:

\begin{theorem}\label{thm:notwins}
  Let $G$ be a minimal nontrivial squco graph. 
  Then $G$ contains no (true or false) twin vertices. 
  In particular, there is no nontrivial distance-hereditary squco graph.
\end{theorem}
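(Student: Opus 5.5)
We first handle true twins, then false twins, and then deduce the distance-hereditary statement. True twins are already excluded by Theorem~\ref{thm:notruetwins}, so only false twins remain. Assume for contradiction that $G$ is a minimal nontrivial squco graph containing false twins. The plan is to delete one vertex from each nontrivial false-twin class and show that the result is still a nontrivial squco graph, contradicting minimality.

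\emph{Step 1: twin classes of $G$ and $\sq{G}$ coincide.} Partition $V(G)$ into false-twin classes, i.e.\ classes of the relation $N(u)=N(v)$. If $N(u)=N(v)$ with $u\neq v$, then $u$ and $v$ are nonadjacent with a common neighbour (since $G$ is connected), so $\dist(u,v)=2$. It follows that $B_2(u)=B_2(v)$, hence $N_{\sq{G}}(u)=N_{\sq{G}}(v)$. Conversely, by the note after Theorem~\ref{thm:nochordal}, the sets of twin vertices of $G$ and $\sq{G}$ coincide, and neither graph has true twins. Counting classes and their sizes via the isomorphism then shows that the false-twin partitions of $G$ and $\sq{G}$ are the same partition $\mathcal{P}$. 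Each class is independent in $G$, and also in $\sq{G}$, since its members are pairwise at distance $2$.

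Fix an isomorphism $\varphi\colon G\to\sq{G}$. It maps twin classes of $G$ onto twin classes of $\sq{G}$, so it induces a size-preserving permutation of $\mathcal{P}$. Adjacency between vertices in different classes, in either graph, depends only on the pair of classes, and classes are independent. Hence \emph{any} bijection $\psi\colon V(G)\to V(G)$ that maps each class $C$ onto $\varphi(C)$ is again an isomorphism $G\to\sq{G}$.

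\emph{Step 2: choose an invariant deletion set.} For every class $C\in\mathcal{P}$ with $|C|\ge 2$, pick one vertex $s_C\in C$, and let $S$ be the set of all these $s_C$. Then $S\neq\emptyset$. Define $\psi$ to send $s_C$ to $s_{\varphi(C)}$ and $C\setminus\{s_C\}$ bijectively onto $\varphi(C)\setminus\{s_{\varphi(C)}\}$. This is possible because $\varphi$ preserves class sizes, so $|C|\ge2$ if and only if $|\varphi(C)|\ge 2$. By Step~1, $\psi$ is an isomorphism $G\to\sq{G}$ with $\psi(S)=S$, so it restricts to an isomorphism $G-S\to \sq{G}-S$.

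\emph{Step 3: $\sq{G}-S=\sq{(G-S)}$.} Let $H=G-S$. Every deleted vertex $s_C$ still has a false twin in $H$. Therefore any shortest path in $G$ through $s_C$ can be rerouted through that twin, which has the same neighbours, none of which is in $S$ since classes are independent. So $H$ is connected and $\dist_H=\dist_G$ on $V(H)$. Consequently $\sq{H}=\sq{G}-S\cong H$, and $H$ is a proper induced squco subgraph of $G$. It is nontrivial: otherwise $G$ would consist of a single twin class, which is edgeless, and edgeless graphs with at least two vertices are disconnected, contradicting Theorem~\ref{thm:squco-radius-diameter}. This contradicts the minimality of $G$.

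\emph{Step 4: the distance-hereditary statement.} If some nontrivial distance-hereditary graph were squco, choose a minimal nontrivial squco induced subgraph $G'$ of it. Then $G'$ is distance-hereditary, since the class is hereditary for induced subgraphs, and it is nontrivial. Hence by~\cite{BM86} it has a pendant vertex or twins. Pendant vertices are impossible in squco graphs, and twins are excluded by the first part, a contradiction.

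The main obstacle is Step~1 together with the construction of $\psi$: we need $S$ to be $\psi$-invariant, which is why we replace $\varphi$ by a class-respecting bijection rather than trying to choose $S$ as a $\varphi$-orbit.
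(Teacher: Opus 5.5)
Your argument is correct, and it takes a somewhat different route from the paper.

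\textbf{How the two routes differ.} The paper fixes one twin $x$ and considers the $\varphi$-orbit $T_1,\dots,T_s$ of its false-twin class. It composes $\varphi$ with the single transposition $(x\ \varphi^s(x))$ so that the orbit of $x$ meets each $T_i$ exactly once, and then deletes that orbit $X$. Minimality forces $G-X$ to be trivial, hence $s=1$ and $|V(G)|=2$.

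You instead delete one representative from every nontrivial false-twin class at once. You get invariance for free from a structural observation: $G$ and $\sq{G}$ are blow-ups, by independent sets, of quotient graphs on the same partition $\mathcal{P}$. Hence every bijection mapping each class $C$ onto $\varphi(C)$ is an isomorphism.

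This buys you two things. It avoids the orbit bookkeeping. And your Step~1 (each $G$-class lies in a $\sq{G}$-class, and the isomorphism gives equally many classes) makes rigorous a point the paper uses tacitly: each $T_i=\varphi^{i-1}(T_1)$ is again a full false-twin class. The paper's route, in exchange, keeps the given isomorphism up to one transposition and deletes only a single orbit. Your Step~4 matches the paper's final paragraph.

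\textbf{A wrong justification in Step~3.} You say the twin's neighbours are not in $S$ ``since classes are independent.'' Independence only shows that the neighbours of $s_C$ are not in $C$. They can lie in $S$: two nontrivial classes may be adjacent, so $s_Cs_D$ can be an edge.

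The conclusion still holds with a one-line fix. Take a shortest path in $G$ between two vertices of $H$, and replace every vertex $s_C$ on it simultaneously by a twin in $C\setminus\{s_C\}$. Consecutive vertices on the path lie in distinct classes. By Step~1, adjacency between distinct classes depends only on the classes, so the result is a walk in $H$ of the same length.

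Hence $H$ is connected and $\dist_H=\dist_G$ on $V(H)$, and the rest of your proof goes through.
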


\begin{proof}
  Let $\varphi\colon \sq{G}\rightarrow G$ be an isomorphism viewed as a permutation of $V(G)$. 
  By Theorem~\ref{thm:notruetwins}, any pair of twins in $G$ must be false twins.  
  Suppose $G$ contains false twins.  
  Let $x\in V(G)$ be a twin, and let $T_1$ be the set of all false twins of $x$ (including $x$).  
  Let $\{T_{1}, T_{2}, \ldots, T_{s}\}$ be the $\varphi$-orbit of $T_{1}$. 
  We may assume that $T_{i} = \varphi^{i-1}(T_{1})$ for
  $i=1,2,\ldots, s$, and that $\varphi^{s}(T_{1}) = T_{1}$, with $s$ being the minimum such positive integer.  
  Since $\varphi$ is an isomorphism, each of these sets $T_{i}$ is a maximal set of pairwise twin vertices.
  Furthermore, since the relation of being false twins is an
  equivalence relation, these sets are pairwise disjoint. 
  Moreover, $|T_{i}|\geq 2$ for all $i$.

  Let $y=\varphi^s(x)$. 
  Suppose first that $y\neq x$.  
  Since $\varphi^{s}(T_{1}) = T_{1}$, we have $y\in T_{1}$, so $y$ is a twin of $x$.  
  Hence, the transposition $\alpha = (x\,\,\, y)$ is an automorphism of $G$.  
  It follows that $\psi = \alpha\circ \varphi \colon\sq{G} \rightarrow G$ is also an isomorphism. 
  Since $\alpha$ only exchanges $x$ and $y$, we have that $\psi(z)$ is identical to $\varphi(z)$ except when $\varphi(z)\in \{x,y\}$. 
  It follows that $\psi^{i}(x) = \varphi^{i}(x)$ for $i=0,1,\ldots, s-1$ and that $\psi^{s}(x) = \psi(\psi^{s-1}(x)) = \psi(\varphi^{s-1}(x)) = \alpha(\varphi(\varphi^{{s-1}}(x))) = \alpha(\varphi^{s}(x)) = \alpha(y) = x$. 
  Hence, the $\psi$-orbit of $x$ is exactly $\{x, \psi(x), \psi^2(x), \ldots, \psi^{s-1}(x)\}$.  
  Therefore, we may assume without loss of generality that $y=x$ and that the $\varphi$-orbit of $x$ is exactly
  $X \coloneqq\{x, \varphi(x), \varphi^2(x), \ldots, \varphi^{s-1}(x)\}$.

  Now, since each vertex in $X$ has a twin outside $X$, it follows that $G-X$ is connected and that the distances in $G-X$ are the same as in $G$ for every pair of vertices $x,y\in V(G) \setminus X$. 
  Let us show that the restriction of $\varphi$ to $V(G)\setminus X$ is an isomorphism from $\sq{(G-X)}$ to $G-X$. 
  Note first that $V\left(\sq{(G-X)}\right)=V(G) \setminus X =V(G-X)$. Since $X$ is $\varphi$-invariant, so is $V(G)-X$ and it follows that  $\varphi\left(V\left(\sq{(G-X)}\right)\right)=V(G-X)$. 
  Now take $x,y\in V(G) \setminus X$. 
  Then $xy\in E\left(\sq{(G-X)}\right)$ $\iff$
    $d_{G-X}(x,y)\geq 3$ $\iff$ $d_{G}(x,y)\geq 3$ $\iff$
        $xy\in E(\sq{G})$ $\iff$ $\varphi(x)\varphi(y)\in E(G)$ $\iff$
            $\varphi(x)\varphi(y)\in E(G-X)$.  
        Hence, $\sq{(G-X)}\cong G-X$, that is, $G-X$ is squco.  
        By the minimality of $G$, the graph $G-X$ must be trivial.  
        Recall that $T_{1}, T_{2},\ldots, T_{s}$ are pairwise disjoint, and that for all $i$, $|T_{i}|\geq 2$, but $|T_{i}\cap X| = 1$.  
            It follows that $s=1$ and hence, $G$            must have exactly two vertices, but there is no squco graph on two vertices.

  Finally, suppose that there exists a nontrivial distance-hereditary squco graph~$G_0$. Then, $G_0$ contains, as an induced subgraph, some minimal nontrivial squco graph $G$. 
Since the class of distance-hereditary graphs is closed under vertex deletions, $G$ is distance-hereditary. 
  Being distance-hereditary, it is well known that $G$
  contains either pendant vertices (which is incompatible with squco graphs; see~\cite{BST94}) or twins. 
  If $G$ contains twins, removing an orbit of twins as before would produce a smaller distance-hereditary graph $G-X$ which is also a squco graph. 
  As before, it follows that $G-X$ is trivial and that $G$ has exactly two vertices, a contradiction.
\end{proof}

\section{Alternate proof of Theorem~\ref{thm:nochordal}}

We found an alternate proof for the fact that there are no nontrivial chordal squco graphs, which we think provides additional insight on the issue. 
For this, we need some additional terminology and results from the literature.

Given a connected graph $G$, a set $S\subseteq V(G)$, and a vertex $w\in V(G)\setminus S$, the \emph{distance} in $G$ from $S$ to $w$ is the length of a shortest path in $G$ from a vertex in $S$ to $w$. 
Given two disjoint sets $X,Y\subseteq V(G)$, we say that $X$ and $Y$ are \emph{complete to each other} if every vertex in $X$ is adjacent to every vertex in $Y$, and \emph{anticomplete to each other} if no vertex in $X$ is adjacent to any vertex in $Y$.

\begin{theorem}[{\cite[Theorem~3.5]{CN84}}]\label{thm:chordal-radius-diameter}
If $G$ is a connected chordal graph, then $d(G)\ge 2r(G)-2$.
\end{theorem}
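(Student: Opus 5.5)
We need to prove the cited theorem (Chang–Nemhauser): a connected chordal graph $G$ satisfies $d(G)\ge 2r(G)-2$. The plan is to use the classical fact that in a chordal graph, every ball $B_k(v)$ induces a convex set, together with a Helly-type argument. The route we commit to is via the metric convexity of disks in chordal graphs and the Helly property of disks of radius with suitable parameters.

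Concretely, let $d=d(G)$. We aim to find a vertex whose eccentricity is at most $\lceil d/2\rceil+1$. That bound gives $r\le d/2+3/2$, i.e. $d\ge 2r-3$, which is one short of the target. So we refine the target: show there is a vertex $c$ with $\dist(c,v)\le \lfloor d/2\rfloor+1$ for all $v$. Then $r\le d/2+1$, i.e. $d\ge 2r-2$.

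\textbf{Step 1 (disks are convex and pairwise intersecting).} For every vertex $v$, consider the disk $D_v=B_{k}(v)$ with $k=\lfloor d/2\rfloor+1$. We show any two such disks meet: $\dist(u,v)\le d\le 2k-1$, so a shortest $u$--$v$ path has a vertex within $k$ of both. Moreover, in a chordal graph disks $B_k(v)$ are convex in the sense that any shortest path between two vertices of $B_k(v)$ stays within $B_{k}(v)$. This follows from the standard fact that chordal graphs have convex balls, proved via the absence of long induced cycles: a shortest path leaving the ball and returning would create, together with geodesics back to $v$, a cycle whose chords are forced and contradict distance minimality.

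\textbf{Step 2 (Helly property).} We then invoke/prove the Helly property for the family $\{D_v\}$. Chordal graphs are exactly those whose disk families satisfy a Helly property for triples with "slack one" (a triple of pairwise distances at most $2k-1$ admits a common vertex within $k$). Triples are handled by taking a geodesic triangle on $u,v,w$: chordality forces the geodesic triangle to be "thin", with a vertex on it within $k$ of all three. From triples, we pass to the whole family by a standard induction using convexity: intersections of convex sets are convex, and convex sets pairwise meeting with the triple property have a common point.

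\textbf{Step 3 (conclusion).} A common vertex $c\in\bigcap_v D_v$ has eccentricity at most $\lfloor d/2\rfloor+1$, so $r\le d/2+1$, which gives the theorem.

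The main obstacle is Step 2, the triple Helly property with exactly the right radius. If the thin-triangle argument becomes messy, we would instead fall back on Chang–Nemhauser's original route via a perfect elimination ordering. Removing a simplicial vertex $s$ does not increase distances among the remaining vertices, since chordal graphs remain distance-preserving under deletion of simplicial vertices. We would then induct on $|V(G)|$, tracking how $r$ and $d$ change by at most one when $s$ is restored.
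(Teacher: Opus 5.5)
The paper gives no proof of this statement; it cites it from Chang and Nemhauser, so your argument has to stand on its own. Your reduction is sound: it suffices to find a vertex of eccentricity at most $k=\lfloor d/2\rfloor+1$. The radius-$k$ disks do pairwise meet, and disks in chordal graphs are indeed geodesically convex.

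\textbf{The gap.} It is in Step~2, where you pass from triples to the whole family ``by a standard induction using convexity.'' That principle is false, because geodesic convexity on chordal graphs does not have Helly number~$3$. In $K_4$ every subset is convex. Any three of the four sets $V\setminus\{x\}$ meet, yet all four have empty intersection; in $K_n$ the Helly number is already $n$. So convexity plus a triple condition cannot yield a common point of all $D_v$. You would need an argument specific to disks, and none is given.

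Two smaller points. The claimed characterization of chordal graphs by your triple property is also false: $C_4$ and $C_5$ satisfy it trivially, since every radius-$2$ disk is the whole graph. Your fallback via simplicial elimination does not close the gap either. The bookkeeping $d(G)\ge d(G-s)$ and $r(G)\le r(G-s)+1$ only gives $d(G)\ge 2r(G)-4$. The whole difficulty is showing that $r$ cannot increase without $d$ increasing correspondingly, and ``tracking changes by at most one'' does not address this.

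\textbf{What is missing.} You need a tree-like Helly property, which chordality supplies through a clique tree $T$. Its bags are the maximal cliques, and the bags containing any fixed vertex form a subtree. For each $v$, the bags meeting the connected set $B_{k-1}(v)$ form a subtree $T_v$ of $T$.

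These subtrees pairwise intersect. Since $\dist(u,v)\le d\le 2k-1$, a shortest $u,v$-path contains either a vertex lying in both $B_{k-1}(u)$ and $B_{k-1}(v)$, or an edge $xy$ with $x\in B_{k-1}(u)$ and $y\in B_{k-1}(v)$. Such a vertex or edge lies in some bag, which then belongs to $T_u\cap T_v$.

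Subtrees of a tree have the Helly property, so some maximal clique $K$ meets every $B_{k-1}(v)$. Every vertex of $K$ is therefore within distance $k$ of every vertex of $G$. This gives $r\le \lfloor d/2\rfloor+1$, i.e.\ $d\ge 2r-2$, and it replaces both your convexity step and your triple-to-family step.
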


Given a graph $G$ and two non-adjacent vertices $x,y\in V(G)$, a \emph{minimal $x,y$-separator} is an inclusion-minimal set $S\subseteq V(G)$ such that $x$ and $y$ are in different connected components of $G-S$. 
A \emph{minimal separator} in a graph $G$ is a set $S\subseteq V(G)$ that is a minimal $x,y$-separator for some
non-adjacent vertex pair $x,y$.  
Given a graph $G$ and a set $X\subseteq V(G)$, we denote by $N_G(X)$ the set of all vertices in $V(G)\setminus X$ having a neighbor in $X$. 
The following characterization of minimal separators in graphs is well known (and easy to prove).

\begin{lemma}\label{lem:minimal-separators}
  Let $G$ be a graph. 
  A set $S\subseteq V(G)$ is a minimal separator in $G$ if and only if there exists two distinct components $C$ and
  $D$ of $G-S$ such that \hbox{$N_G(V(C)) = N_G(V(D)) = S$}.
\end{lemma}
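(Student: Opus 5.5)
We prove both directions of Lemma~\ref{lem:minimal-separators} directly from the definitions, using only that a minimal $x,y$-separator $S$ is inclusion-minimal among sets separating $x$ from $y$.

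\emph{Forward direction.} Let $S$ be a minimal $x,y$-separator for some non-adjacent $x,y$. Let $C$ and $D$ be the components of $G-S$ containing $x$ and $y$; they are distinct because $S$ separates $x$ from $y$. Since $C$ is a component of $G-S$, every vertex outside $V(C)$ with a neighbor in $C$ lies in $S$, so $N_G(V(C))\subseteq S$, and likewise $N_G(V(D))\subseteq S$. For the reverse inclusion, take $s\in S$ and set $S'=S\setminus\{s\}$. By minimality, $S'$ does not separate $x$ from $y$, so $G-S'$ contains an $x$--$y$ path $P$. This path must use $s$, since otherwise it would lie in $G-S$. Walking along $P$ from $x$, the vertices before $s$ avoid $S$ and stay in $C$, so the predecessor of $s$ on $P$ is a neighbor of $s$ in $C$ (or is $x$ itself). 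Hence $s\in N_G(V(C))$. Symmetrically, the successor of $s$ on $P$ shows $s\in N_G(V(D))$. Therefore $N_G(V(C))=N_G(V(D))=S$.

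\emph{Backward direction.} Suppose $C$ and $D$ are distinct components of $G-S$ with $N_G(V(C))=N_G(V(D))=S$. Pick $x\in V(C)$ and $y\in V(D)$. They are non-adjacent and separated by $S$, since distinct components of $G-S$ have no edges between them. For minimality, take any proper subset $S'\subsetneq S$ and some $s\in S\setminus S'$. The vertex $s$ has a neighbor $c\in V(C)$ and a neighbor $d\in V(D)$. Because $C$ and $D$ are connected and avoid $S\supseteq S'$, we can join an $x$--$c$ path in $C$, the edges $cs$ and $sd$, and a $d$--$y$ path in $D$. This gives an $x$--$y$ walk in $G-S'$, so $S'$ does not separate $x$ from $y$. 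Hence $S$ is a minimal $x,y$-separator.

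\emph{Expected difficulty.} The only real obstacle is the reverse inclusion in the forward direction, which needs the path argument through $s$. This is where we must check that the segment of $P$ before $s$ stays inside $C$, which holds because that segment avoids all of $S$. The degenerate cases are handled the same way: if $x$ itself is adjacent to $s$, then $s\in N_G(V(C))$ immediately. Everything else is routine.
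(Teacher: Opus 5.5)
Your proof is correct and complete. Both inclusions in the forward direction are handled properly, and so is the minimality argument in the backward direction, including the vacuous case $S=\emptyset$. The paper gives no proof of this lemma, calling it well known and easy to prove, and your argument is the standard one that fills in that omission.
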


Minimal separators in chordal graphs are characterized as follows.

\begin{theorem}[\cite{Dir61}]\label{thm:Dirac} Every minimal separator in a chordal graph is a clique.
\end{theorem}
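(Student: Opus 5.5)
Dirac's theorem (Theorem~\ref{thm:Dirac}) is a classical result, and within the paper it is only cited, not reproved. Still, I would prove it as follows, using only the characterization of minimal separators in Lemma~\ref{lem:minimal-separators}.

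Let $G$ be chordal and let $S$ be a minimal separator. By Lemma~\ref{lem:minimal-separators}, there are two distinct components $C$ and $D$ of $G-S$ with $N_G(V(C))=N_G(V(D))=S$. Suppose, for a contradiction, that $S$ is not a clique. Then there are two non-adjacent vertices $u,v\in S$. Each of $u$ and $v$ has a neighbor in $C$, so there is a $u,v$-path whose interior vertices all lie in $C$. Among all such paths I would choose one, $P_C$, of minimum length. By minimality, $P_C$ is an induced path: a chord between two interior vertices, or between an endpoint and an interior vertex, would give a shorter path with interior still in $C$. Also $uv\notin E(G)$, so $P_C$ has at least one interior vertex. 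In the same way, I would take a shortest $u,v$-path $P_D$ with interior in $D$.

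Concatenating $P_C$ and $P_D$ gives a cycle $Z$ through $u$ and $v$ of length at least $4$, since each path has length at least $2$. Next I would check that $Z$ has no chord.
\begin{itemize}
\item There are no chords within $P_C$ or within $P_D$, because both are induced.
\item There is no chord between an interior vertex of $P_C$ and an interior vertex of $P_D$, since $C$ and $D$ are distinct components of $G-S$ and are therefore anticomplete to each other.
\item There is no chord $uv$, by assumption.
\end{itemize}
So $Z$ is a chordless cycle of length at least $4$, contradicting chordality. Hence $S$ is a clique.

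The only step that needs care is the claim that a shortest path with interior in $C$ is induced, including at its endpoints. This follows directly from minimality, so I expect no real obstacle; the whole argument is routine.
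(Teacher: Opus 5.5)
Your proof is correct. It is the classical argument: take a shortest $u,v$-path through each of two full components, observe that both paths are induced, and glue them into a chordless cycle of length at least $4$. Your chord check does cover every pair of non-consecutive vertices on that cycle. The paper gives no proof of this statement; it cites Dirac~\cite{Dir61} and uses the result as a black box in the alternate proof of Theorem~\ref{thm:nochordal}. Your argument needs only the ``only if'' direction of Lemma~\ref{lem:minimal-separators}, so it makes that step self-contained.
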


Now we are ready to present:\bigskip

\textbf{Alternate proof of Theorem~\ref{thm:nochordal}}.  Suppose for a contradiction that $G$ is a chordal squco graph with $G\ncong K_1$.
By Theorem~\ref{thm:squco-radius-diameter}, $G$ is a connected graph with radius $3$ and diameter either $3$ or $4$.  
Since $r(G) = 3$ and $G$ is chordal, Theorem~\ref{thm:chordal-radius-diameter} implies that $d(G)\ge 4$.  
Therefore, $d(G) = 4$.  
Let $u$ and $v$ be a pair of vertices in $G$ at distance $4$ and let $(u = v_0,v_1,v_2,v_3,v_4 = v)$ be a shortest $u,v$-path in $G$.  
For all $i\in \{0,1,\ldots, 4\}$, let $L_i$ be the set of vertices of $G$ at distance exactly $i$ from $u$.  Then, $\{L_0, L_1, L_2, L_3, L_4\}$ is a partition of $V(G)$ into pairwise disjoint non-empty sets (note that $v_i\in L_i$ for all $i$).

Let $G'$ be the subgraph of $G$ induced by $L_3\cup L_4$ and let $C$ be the component of $G'$ containing $v$.  
Moreover, let $S = N_G(V(C))$.  
We claim that $S\subseteq L_2$.  
Indeed, the definition of $C$ implies that no vertex in $C$ has a neighbor in $(L_3\cup L_4)\setminus V(C)$.  
Since $S\subseteq V(G)\setminus V(C)$, this implies that no vertex of $S$ belongs to $L_3\cup L_4$.
Furthermore, since the sets $L_i$ are the distance layers from $u$, no vertex in $C$ has a neighbor in $L_0\cup L_1$; this implies that no vertex of $S$ belongs to $L_0\cup L_1$.  
Hence, $S\subseteq L_2$, as claimed.

Note that $C$ is the component of $G-S$ containing $v$.  
Let $D$ be the component of $G-S$ containing $u$.  
The definition of $S$ implies that $C\neq D$. 
We claim that $N_G(V(D)) = S$.  
The definition of $D$ implies that $N_G(V(D)) \subseteq S$.  
Moreover, if $w\in S$, then $w$ is in $L_2$ and is therefore adjacent to some vertex $z$ in $L_1$.
Since $z$ is adjacent to $u$ and $z\not\in S$, we have $z\in V(D)$.
Thus, every vertex in $S$ has a neighbor in $V(D)$, which implies that $S\subseteq N_G(V(D))$, thus establishing the claimed equality $N_G(V(D)) = S$.

Since $C$ and $D$ are distinct components of the graph $G-S$ with $N_G(V(C)) = N_G(V(D)) = S$, Lemma~\ref{lem:minimal-separators} implies that $S$ is a minimal separator in $G$.  In turn, Theorem~\ref{thm:Dirac} implies that $S$ is a clique.

To complete the proof, it will be useful to consider a different partition of $V(G)$ into five parts.  
For $i\ge 1$, let $A_i$ denote the set of vertices in $C$ that are at distance $i$ from $S$ in $G$ and let $B_i$ denote the set of vertices in $V(G)\setminus (V(C)\cup S)$ that are at distance $i$ from $S$ in $G$.
Then $A_1 = V(C)\cap L_3$ and $A_2 = V(C)\cap L_4$.  
Since $v_3\in A_1$ and $v= v_4\in A_2$, sets $A_1$ and $A_2$ are non-empty.
Analogously, since $v_2\in S$ and $v_1\in L_1$ is adjacent to $v_2$, we have $v_1\in B_1$ and, similarly, $u = v_0\in B_2$. 
Hence, sets $B_1$ and $B_2$ are also non-empty.

Next, we claim that $A_i = \emptyset$ for all $i\ge 3$.  Indeed, if this were not the case, then there would exist a vertex in $A_3$, but such a vertex would be at distance $5$ from $u$, contradicting the fact that $G$ has diameter $4$.  
Similarly, $B_i = \emptyset$ for all $i\ge 3$.  
It follows that $\{A_2, A_1, S, B_1, B_2\}$ is a partition of $V(G)$ into pairwise disjoint non-empty sets.

Let us now analyze the adjacencies in $\sq{G}$ between
vertices belonging to different sets in this partition.  
By construction, for every vertex pair $x,y\in V(G)$ such that $x\in A_2$ and $y\in B_1\cup B_2$, the distance in $G$ from $x$ to $y$ is at least $3$.
This implies that any such pair forms an edge in $\sq{G}$.  
Similarly, if $x\in A_1$ and $y\in B_2$, then $d_G(x,y) \ge 3$ and hence $x$ and $y$ are adjacent in $\sq{G}$.  
On the other hand, if $x\in A_1$ and $y\in S$, then the facts that $x$ is adjacent in $G$ to a vertex in $S$ and $S$ is a
clique in $G$ imply that the distance in $G$ between $x$ and $y$ is at most $2$.  
Hence $A_1$ and $S$ are anticomplete to each other in $\sq{G}$ and, similarly, so are $B_1$ and $S$.

Next, we show that one of $A_2$ and $B_2$ is a clique in $\sq{G}$. 
Suppose that $A_2$ contains two distinct vertices, say $a,a'$, that are non-adjacent in $\sq{G}$, and that $B_2$ contains two distinct vertices, say $b,b'$, that are non-adjacent in $\sq{G}$.  
Then the vertex set $\{a,a',b,b'\}$ induces a $C_4$ in $\sq{G}$.
It follows that $\sq{G}$ is not chordal, contradicting the fact that $\sq{G}$ is isomorphic to the chordal graph $G$.

Note that since $\sq{G}\cong G$, the graph $\sq{G}$ is connected.  
We complete the proof by showing that $r(\sq{G}) \le 2$.  
This will suffice, as it will imply that $r(\sq{G}) \neq r(G)$ and hence the two graphs cannot be isomorphic.

Suppose first that $A_2$ is a clique in $\sq{G}$.  
We claim that in this case, vertex $v$ has eccentricity at most $2$ in $\sq{G}$ (recall that $v\in A_2$).  
Since $A_2$ is a clique in $\sq{G}$ and $A_2$ is complete in $\sq{G}$ to $B_1\cup B_2$, the neighborhood of $v$ in $\sq{G}$ contains all vertices in $(A_2\setminus \{v\})\cup B_1\cup B_2$.  
Since $A_1$ is complete to $B_2$ in $\sq{G}$, every vertex in $A_1$ is at distance at most $2$ from $v$ in $\sq{G}$.  
Finally, consider a vertex $x\in S$.  
Since $S$ is a clique in $G$, it is an independent set in $\sq{G}$.
Therefore, in the graph $\sq{G}$, vertex $x$ cannot be adjacent to any vertex in $S$.  
Moreover, since $S$ is anticomplete in $\sq{G}$ to $A_1\cup B_1$, we infer that all the neighbors of $x$ in $\sq{G}$ are in
$A_2\cup B_2$.  
Since $\sq{G}$ is connected, it contains a vertex $y\in A_2\cup B_2$ adjacent to $x$, which implies that $x$ is
at distance at most $2$ from $v$ in $\sq{G}$.  
We showed that every vertex in $\sq{G}$ is at distance at most $2$ from $v$, that is, $v$ has eccentricity at most $2$ in $\sq{G}$, as claimed.  
The case when $B_2$ is a clique in $\sq{G}$ is similar.  
This completes the alternate proof of Theorem~\ref{thm:nochordal}. \qed

\bibliographystyle{mapbib4} 
\bibliography{squco} 

\end{document}